\newtheorem{theorem}{Theorem}
\newtheorem{proposition}[theorem]{Proposition}
\newtheorem{corollary}[theorem]{Corollary}
\newtheorem{example}[theorem]{Example}
\newtheorem{remark}[theorem]{Remark}
\begin{document}

\title{Classification of Codazzi and note on minimal hypersurfaces in $Nil^{4}$}

\author{Noura Djellali${^1}$, Abdelbasset Hasni${^1}$, \\ Ahmed Mohammed Cherif${^{1,*}}$ \and Mohamed Belkhelfa${^1}$}

\footnote{ Department of  Mathematics, University Mustapha Stambouli, Mascara 29000, Algeria.\\
${^*}${\it Corresponding author, e-mail: a.mohammedcherif@univ-mascara.dz}}

\date{}

\maketitle

%\supp{This research supported by ... } %Uncomment to specify a support

\begin{abstract}
In this paper, we give a classification of Codazzi hypersurfaces in a Lie group $(Nil^{4},\widetilde g)$.
We also give a characterization  of a class of minimal hypersurfaces in $(Nil^{4},\widetilde g)$ with an example of a minimal surface in this class.
\\
\textit{ keywords:} Codazzi hypersurfaces, minimal hypersurfaces.\\
\textit{ Mathematics Subject Classification 2020:} 53C42, 53C35.
\end{abstract}

%%%%%%%%%%%%%%%%%%%%%%%%%%%%%%%%%%%%%%%%%%%%%%%%%%%%%%%%%%%%%%%%%%%%%%%%%%%%%%%%%%
%%%%%%%%%%%%%%%%%%%%%%%%%% Contents of Section 1 %%%%%%%%%%%%%%%%%%%%%%%%%%%%%%%%%
%%%%%%%%%%%%%%%%%%%%%%%%%%%%%%%%%%%%%%%%%%%%%%%%%%%%%%%%%%%%%%%%%%%%%%%%%%%%%%%%%%
\section{Introduction}
Let $( M^m ,g)$ be a Riemannian manifold, $ \nabla$, $ R$, $ S $ and $ \tau $ denote the Levi-Civita connection, the Riemannian curvature, the Ricci curvature, and the scalar curvature of $(M,g)$, respectively. Thus
\begin{equation}\label{eq1.1}
 R(X,Y)Z=[\nabla_{X},\nabla_{Y}]Z-\nabla_{[X,Y]}Z,
\end{equation}
\begin{eqnarray}\label{eq1.2}
  S(X,Y)&=&\sum_{i=1}^{m} g(R(X,e_{i})e_{i},Y),
\end{eqnarray}
\begin{equation}\label{eq1.3}
\tau=\sum_{i,j=1}^{m}g(R(e_{i},e_{j})e_{j},e_{i}),
\end{equation}
where $\{e_i\}_{1\leq i\leq m}$ is an orthonormal frame on $(M,g)$, and $X,Y,Z\in\mathfrak{X}(M^m)$. A symmetric $(0,2)$-tensor field $T$ on $(M^m,g)$ is said to be a Codazzi tensor if it satisfies the Codazzi equation
\begin{equation}\label{eq1.4}
 (\nabla_{X}T)(Y,Z)= (\nabla_{Y}T)(X,Z),\quad\forall X,Y,Z \in\mathfrak{X}(M^m),
\end{equation}
(see \cite{ON,W}). \\
Let $(N^{n},g)$ be a hypersurface in a Riemannian manifold $(M^{n+1},\widetilde g)$, where $g$ is the induced Riemannian metric by $\widetilde g$. We denote by $\nabla$ (resp. $\widetilde{\nabla}$) the  Levi-Civita connection of $(N^n,g)$ (resp. of $(M^{n+1},\widetilde g)$),
$R$ (resp. $\widetilde{R}$) the Riemannian curvature of $(N^{n},g)$ (resp. of $(M^{n+1},\widetilde g)$),
$B(\cdot,\cdot)=h(\cdot,\cdot)\xi$ the second fundamental form of the hypersurface  $(N^n,g)$, $A_\xi$ the shape operator with respect to the unit normal vector field $\xi$,  $H=(1/n)\operatorname{trace}_gB$ the mean curvature of $(N^n,g)$
(see \cite{ON}). Under the notation above, we have
\begin{equation}\label{eq1.5}
 \widetilde\nabla_{X}Y= \nabla_{X}Y+h(X,Y)\xi,
\end{equation}
\begin{equation}\label{eq1.6}
A_{\xi}X=-\widetilde{\nabla}_{X}\xi,\quad\forall X,Y \in\mathfrak{X}(N^n),
\end{equation}
Note that, the components of the second fundamental form $B$ are given by
\begin{equation}\label{eq1.7}
h(X,Y)= g(A_\xi X,Y)=-g(\widetilde{\nabla}_{X}\xi,Y),\quad\forall X,Y \in\mathfrak{X}(N^n).
\end{equation}
The equations of Gauss and Codazzi are given respectively by
\begin{equation}\label{eq1.8}
    \widetilde{g}(\widetilde{R}(X,Y)Z,W)=g(R(X,Y)Z,W)+h(X,W)h(Y,Z)-h(X,Z)h(Y,W),
\end{equation}
\begin{equation}\label{eq1.9}
 \widetilde{g}(\widetilde{R}(X,Y)Z,\xi) =(\nabla_Y h)(X,Z)-(\nabla_X h)(Y,Z),
\end{equation}
where $(\nabla_X h)(Y,Z)=X(h(Y,Z))-h(\nabla_XY,Z)-h(Y,\nabla_XZ)$ called the cubic form, and $X,Y,Z,W\in \mathfrak{X}(N^n)$. The hypersurface $(N^n,g)$ is said to be parallel if the cubic form vanishes identically, i.e., $\nabla h=0$. A special case of parallel hypersurfaces are totally geodesic hypersurfaces, for which the second fundamental form $B=0$. The hypersurface $(N^n,g)$ is called Codazzi (resp. minimal) if the symmetric $(0,2)$-tensor field $h$ is a Codazzi tensor (resp. if $H=0$) (see \cite{ON}).\\
The $4$-dimensional lie group $Nil^{4}$ is a well known nilpotent lie group. It is also one of the $4$-dimensional thurston model geometries,  \cite{Filip}.
As the $Nil^{4}$ space are well know and one of its left invariant Riemannian metric is well known and also used in many research works, we will begin with an explicit calculus of this metric and its geometric proprieties, (that one of the autors has do a part of it, in \cite{H}, based on the definition of $Nil^{4}$ and its most used left invariant metric), only to let the reader follow us easily.

The nilpotent Lie group $Nil^{4}=\mathbb{R}^3\ltimes_{U}\mathbb{R}$,  where $ U(t)=\exp(tL)$, with
\begin{equation}\label{eq1.10}
 L=\begin{pmatrix}
 0 & 1 & 0\\
 0 & 0 & 1 \\
 0  & 0 & 0
\end{pmatrix},\quad
 \exp(tL)=I_{3}+tL+\frac{t^{2}}{2}L^{2}=\begin{pmatrix}
 1 & t & \frac{t^{2}}{2}\\
 0 & 1 & t \\
 0  & 0 & 1
\end{pmatrix}.
\end{equation}
The semidirect product in $ Nil^{4} $ is given by
\begin{eqnarray}\label{eq1.11}
  % \nonumber to remove numbering (before each equation)
   (V,t)(V^{\prime},t^{\prime})&=&\nonumber(V+\exp(t L)V^{\prime},t+t^{\prime})\\
                               &=&\nonumber\left(\begin{pmatrix}
 x\\
 y \\
 z
\end{pmatrix}+ \begin{pmatrix}
 1 & t & \frac{t^{2}}{2}\\
 0 & 1 & t \\
 0  & 0 & 1
\end{pmatrix}\begin{pmatrix}
 x^{\prime}\\
 y^{\prime} \\
 z^{\prime}
\end{pmatrix},t+t^{\prime}\right)\\
   &=&\left(\begin{pmatrix}
 x+x^{\prime}+t y^{\prime}+ \frac{t^{2}}{2}z^{\prime}\\
 y+y^{\prime}+tz^{\prime} \\
 z+z^{\prime}
\end{pmatrix},t+t^{\prime}\right),
         \end{eqnarray}
for all
$
 V=\begin{pmatrix}
 x\\
 y \\
 z
\end{pmatrix},
$
$
 V^{\prime}=\begin{pmatrix}
 x^{\prime}\\
 y^{\prime} \\
 z^{\prime}
\end{pmatrix} \in\mathbb{R}^3,$ and $t\in\mathbb{R}$. We have the parameterization
\begin{eqnarray}\label{eq1.12}
% \nonumber to remove numbering (before each equation)
  \phi: Nil^{4} &\longrightarrow &\nonumber \mathbb{R}^4 . \\
                           \left(\begin{pmatrix}
 x\\
 y \\
 z
\end{pmatrix},t\right) & \longmapsto & (x,y,z,t)
\end{eqnarray}
Taking the left-invariant frame fields
\begin{eqnarray}\label{eq1.13}
&  &\nonumber e_{1}=\frac{\partial}{\partial x}, \\
&  &\nonumber e_{2}=t\frac{\partial}{\partial x}+\frac{\partial}{\partial y},\\
&  & e_{3}=\frac{t^{2}}{2}\frac{\partial}{\partial x}+t\frac{\partial}{\partial y}+\frac{\partial}{\partial z},\\
&  &\nonumber e_{4}=\frac{\partial}{\partial t}.
\end{eqnarray}
So that, the dual coframe fields are given by
\begin{eqnarray}\label{eq1.14}
% \nonumber to remove numbering (before each equation)
&  & \theta_{1}=\nonumber dx-tdy+\frac{t^{2}}{2}dz, \\
&  & \theta_{2}=\nonumber dy-tdz,\\
&  & \theta_{3}=dz,\\
&  & \theta_{4}=\nonumber dt.
 \end{eqnarray}
The matrix of a Riemannian metric $ \widetilde g=\theta_{1}^{2}+\theta_{2}^{2}+\theta_{3}^{2}+\theta_{4}^{2}$ is given by
$$ (\widetilde{g}_{ij})=\begin{pmatrix}
 1 & -t & \frac{t^{2}}{2} & 0\\
 -t & 1+t^{2} &  -t(1+\frac{t^{2}}{2}) & 0 \\
 \frac{t^{2}}{2} & -t(1+\frac{t^{2}}{2})  & 1+t^{2}+\frac{t^{4}}{4} & 0 \\
   0 & 0 & 0& 1
\end{pmatrix},
$$
In this paper, we study some geometric properties of a Riemannian manifold $(Nil^{4},\widetilde g)$.
We also give a characterization  of a minimal hypersurfaces in $(Nil^{4},\widetilde g)$ which have a normal vector field depend only on, last coordinate, $t$.

\section{Geometric properties of $(Nil^{4},\widetilde g)$}

\begin{proposition}\label{prop1}
The non-zero of the Levi-Civita connection $\widetilde{\nabla} $ of $(Nil^{4},\widetilde g)$ are given by
\begin{eqnarray*}
% \nonumber to remove numbering (before each equation)
\widetilde\nabla_{e_{1}}e_{2}&=&\frac{1}{2}e_{4} \,, \quad\quad\quad\quad\quad\,\,\,\,\,\, \widetilde\nabla_{e_{1}}e_{4}=-\frac{1}{2}e_{2}\\
\widetilde\nabla_{e_{2}}e_{1}&=&\frac{1}{2}e_{4} \,,   \quad\quad\quad\quad\quad\,\,\,\,\,\, \widetilde\nabla_{e_{2}}e_{3}=\frac{1}{2}e_{4}\\
\widetilde\nabla_{e_{2}}e_{4}&=&-\frac{1}{2}(e_{1}+e_{3}) \,, \quad\quad\,\,   \widetilde\nabla_{e_{3}}e_{2}=\frac{1}{2}e_{4} \\
\widetilde\nabla_{e_{3}}e_{4}&=&-\frac{1}{2}e_{2}  \,,   \quad\quad\quad\quad\quad \widetilde\nabla_{e_{4}}e_{1}=-\frac{1}{2}e_{2}\\
\widetilde\nabla_{e_{4}}e_{2}&=&\frac{1}{2}(e_{1}-e_{3}) \,,    \quad\quad\quad\widetilde\nabla_{e_{4}}e_{3}=\frac{1}{2}e_{2} .
\end{eqnarray*}
\end{proposition}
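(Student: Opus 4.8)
The plan is to exploit the left-invariance of $\widetilde g$: since the frame $\{e_1,e_2,e_3,e_4\}$ of \eqref{eq1.13} is globally orthonormal with $\widetilde g(e_i,e_j)=\delta_{ij}$ \emph{constant}, the Levi-Civita connection is completely determined by the Lie brackets through the Koszul formula, which in this situation collapses to
\[
2\,\widetilde g(\widetilde\nabla_{e_i}e_j,e_k)=\widetilde g([e_i,e_j],e_k)-\widetilde g([e_j,e_k],e_i)+\widetilde g([e_k,e_i],e_j).
\]

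First I would compute all the brackets $[e_i,e_j]$ directly from the coordinate expressions in \eqref{eq1.13}. Because $e_1,e_2,e_3$ have coefficients depending only on $t$ and $e_4=\partial/\partial t$, a short calculation shows that the only nonvanishing brackets are $[e_2,e_4]=-e_1$ and $[e_3,e_4]=-e_2$ (equivalently $[e_4,e_2]=e_1$ and $[e_4,e_3]=e_2$); in particular $e_1$ is central and $[e_2,e_3]=0$. Then I would substitute these brackets into the displayed identity for each ordered pair $(i,j)$ and read off $\widetilde\nabla_{e_i}e_j=\sum_k \widetilde g(\widetilde\nabla_{e_i}e_j,e_k)\,e_k$. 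Every nonzero term on the right-hand side involves a bracket, hence the index $4$, so the only nonzero covariant derivatives are those whose index set meets $\{4\}$, and one recovers exactly the table in the statement.

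Finally, as a consistency check I would verify that the table obtained is torsion-free, $\widetilde\nabla_{e_i}e_j-\widetilde\nabla_{e_j}e_i=[e_i,e_j]$, and metric-compatible, $\widetilde g(\widetilde\nabla_{e_i}e_j,e_k)+\widetilde g(e_j,\widetilde\nabla_{e_i}e_k)=0$; both are immediate by inspection of the listed entries.

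There is no conceptual obstacle here; the only real care is bookkeeping — getting every Lie bracket and every sign correct across all index combinations. (One could instead compute the Christoffel symbols of $(\widetilde g_{ij})$ in the coordinates $(x,y,z,t)$, but this is considerably messier because those metric coefficients are not constant, which is exactly why the orthonormal-frame computation is the natural route.)
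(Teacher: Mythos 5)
Your proof is correct, but it takes a genuinely different route from the paper's. The paper computes the Christoffel symbols $\widetilde{\Gamma}^k_{ij}$ of $(\widetilde g_{ij})$ directly in the coordinates $(x,y,z,t)$ (the list of non-zero symbols, several of which depend on $t$, is the entire content of its proof) and then converts to the frame $\{e_i\}$ via \eqref{eq1.13}; this is precisely the ``messier'' coordinate computation you set aside in your closing parenthetical. You instead compute the Lie brackets first and feed them into the reduced Koszul formula for an orthonormal left-invariant frame. Your bracket computation is right: with $e_4=\partial/\partial t$ and $e_1,e_2,e_3$ having $t$-dependent coefficients in $\partial_x,\partial_y,\partial_z$ only, the sole non-zero brackets are $[e_4,e_2]=e_1$ and $[e_4,e_3]=e_2$, and substituting these into the Koszul identity does reproduce the table (e.g.\ $2\widetilde g(\widetilde\nabla_{e_4}e_2,e_1)=\widetilde g([e_4,e_2],e_1)=1$ and $2\widetilde g(\widetilde\nabla_{e_4}e_2,e_3)=\widetilde g([e_3,e_4],e_2)=-1$ give $\widetilde\nabla_{e_4}e_2=\tfrac12(e_1-e_3)$). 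One structural point worth noting: the paper deduces the brackets from the connection (its Corollary~\ref{co1} is stated as a consequence of Proposition~\ref{prop1} via torsion-freeness), whereas you reverse that order, deriving the brackets independently from \eqref{eq1.13} and then the connection from the brackets; there is no circularity, and your ordering is arguably the more natural one for a left-invariant metric, since it never touches the non-constant coordinate components of $\widetilde g$. The paper's route has the minor advantage of producing the coordinate Christoffel symbols as a by-product, but for the stated proposition your frame computation is shorter and less error-prone.
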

\begin{proof}
Note that, the non-zero of Christoffel symbols $\widetilde{\Gamma}^{k}_{ij} $ for $i,j,k \in\lbrace 1,2,3,4\rbrace $ are given by
\begin{eqnarray*}
 \widetilde{\Gamma}^{4}_{12}&=&\frac{1}{2} \,,\quad\quad\quad\quad\,\,\,\,\,\,\widetilde{\Gamma}^{4}_{13}=-\frac{t}{2}\\
 \widetilde{\Gamma}^{1}_{14}&=&-\frac{t}{2} \,,\quad \quad\quad\quad\widetilde{\Gamma}^{2}_{14}=-\frac{1}{2} \\
 \widetilde{\Gamma}^{4}_{22}&=&-t \,,\quad\quad\quad\quad\,\,\,\widetilde{\Gamma}^{4}_{23}=\frac{1}{2}+\frac{3t^{2}}{4} \\
\widetilde{\Gamma}^{1}_{24}&=&-\frac{1}{2}+\frac{t^{2}}{4}\,,\quad\quad\widetilde{\Gamma}^{3}_{24}=-\frac{1}{2}\\
\widetilde{\Gamma}^{4}_{33}&=&-t(1+\frac{t^{2}}{2})\,,\,\quad\widetilde{\Gamma}^{2}_{34}=-\frac{1}{2}+\frac{t^{2}}{4} \\
\widetilde{\Gamma}^{3}_{34}&=&\frac{t}{2}\,.
\end{eqnarray*}
Proposition \ref{prop1} follows from (\ref{eq1.13}).
\end{proof}

\begin{corollary}\label{co1}
The non-zero Lie brackets of the basis $ \lbrace e_{i}\rbrace_{1\leq i\leq 4} $ are given by
\begin{eqnarray*}
% \nonumber to remove numbering (before each equation)
  [e_{4}, e_{2}] = e_{1} & , &  [e_{4}, e_{3}]=e_{2} .
\end{eqnarray*}
\end{corollary}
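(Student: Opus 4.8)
The plan is to deduce the bracket relations directly from Proposition \ref{prop1} using the fact that the Levi-Civita connection $\widetilde\nabla$ is torsion-free, so that for all $i,j$ we have $[e_i,e_j]=\widetilde\nabla_{e_i}e_j-\widetilde\nabla_{e_j}e_i$. Thus the entire computation reduces to subtracting pairs of entries already listed in Proposition \ref{prop1}, and no new information is needed.

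Concretely, I would run through the $\binom{4}{2}=6$ unordered pairs. For $\{1,2\}$: $[e_1,e_2]=\widetilde\nabla_{e_1}e_2-\widetilde\nabla_{e_2}e_1=\tfrac12 e_4-\tfrac12 e_4=0$. For $\{1,4\}$: $[e_1,e_4]=\widetilde\nabla_{e_1}e_4-\widetilde\nabla_{e_4}e_1=-\tfrac12 e_2-(-\tfrac12 e_2)=0$. For $\{2,3\}$: $[e_2,e_3]=\widetilde\nabla_{e_2}e_3-\widetilde\nabla_{e_3}e_2=\tfrac12 e_4-\tfrac12 e_4=0$. The pair $\{1,3\}$ is immediate since none of $\widetilde\nabla_{e_1}e_3$, $\widetilde\nabla_{e_3}e_1$ appears among the non-zero connection terms, so $[e_1,e_3]=0$. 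The two surviving pairs are $\{2,4\}$, where $[e_2,e_4]=\widetilde\nabla_{e_2}e_4-\widetilde\nabla_{e_4}e_2=-\tfrac12(e_1+e_3)-\tfrac12(e_1-e_3)=-e_1$, giving $[e_4,e_2]=e_1$; and $\{3,4\}$, where $[e_3,e_4]=\widetilde\nabla_{e_3}e_4-\widetilde\nabla_{e_4}e_3=-\tfrac12 e_2-\tfrac12 e_2=-e_2$, giving $[e_4,e_3]=e_2$.

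As an independent check (which could instead be offered as the proof itself), one can compute the brackets straight from the coordinate expressions (\ref{eq1.13}): since $e_4=\partial/\partial t$, one gets $[e_4,e_2]=[\partial_t,\,t\,\partial_x+\partial_y]=\partial_x=e_1$ and $[e_4,e_3]=[\partial_t,\,\tfrac{t^2}{2}\partial_x+t\,\partial_y+\partial_z]=t\,\partial_x+\partial_y=e_2$, while $e_1,e_2,e_3$ all have coefficients depending only on $t$ and involve no $\partial_t$, so they commute pairwise. There is essentially no obstacle here; the only care needed is the sign bookkeeping in the $\{2,4\}$ and $\{3,4\}$ cases to see that the $e_3$-components cancel in the first and the $e_2$-components add in the second.
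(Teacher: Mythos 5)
Your proof is correct and follows exactly the paper's approach: the paper likewise deduces the brackets from Proposition \ref{prop1} via the torsion-free identity $[e_i,e_j]=\widetilde\nabla_{e_i}e_j-\widetilde\nabla_{e_j}e_i$, only stating this without writing out the six pairwise computations that you carry out explicitly. Your supplementary coordinate check using (\ref{eq1.13}) is also correct but is not part of the paper's argument.
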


\begin{proof}
Follows directly by Proposition \ref{prop1}, with $[e_{i}, e_{j}]=\widetilde\nabla_{e_{i}}e_{j}-\widetilde\nabla_{e_{j}}e_{i}$ for all $i,j=1,2,3,4$.
\end{proof}

\begin{proposition}\label{prop2}
The only non-zero components of Riemannian curvature of $(Nil^{4},\widetilde{g})$ are given by
\begin{eqnarray*}
% \nonumber to remove numbering (before each equation)
 \widetilde{g}(\widetilde{R}(e_1,e_2)e_1,e_2) &=& -\frac{1}{4}\,,\quad\quad\,\widetilde{g}(\widetilde{R}(e_1,e_2)e_2,e_3) = \frac{1}{4} \\
 \widetilde{g}(\widetilde{R}(e_1,e_4)e_1,e_4) &=& -\frac{1}{4}\,,\quad\quad\,\widetilde{g}(\widetilde{R}(e_1,e_4)e_3,e_4) = \frac{1}{4} \\
\widetilde{g}(\widetilde{R}(e_2,e_1)e_2,e_3)  &=& -\frac{1}{4}\,,\quad\quad\,\widetilde{g}(\widetilde{R}(e_2,e_3)e_2,e_3) = -\frac{1}{4} \\
\widetilde{g}(\widetilde{R}(e_2,e_4)e_2,e_4)  &=& \frac{1}{2}\,,\quad\quad\quad\widetilde{g}(\widetilde{R}(e_3,e_4)e_3,e_4)  = \frac{3}{4}.
\end{eqnarray*}
\end{proposition}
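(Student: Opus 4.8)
The plan is to compute the curvature tensor directly from its definition (\ref{eq1.1}) on the left-invariant orthonormal frame $\{e_i\}_{1\le i\le 4}$, inserting the covariant derivatives $\widetilde\nabla_{e_i}e_j$ supplied by Proposition \ref{prop1} and the Lie brackets supplied by Corollary \ref{co1}. Since $\{e_i\}$ is $\widetilde g$-orthonormal, each number $\widetilde R_{ijkl}:=\widetilde g(\widetilde R(e_i,e_j)e_k,e_l)$ is obtained by expanding
\[
\widetilde R(e_i,e_j)e_k=\widetilde\nabla_{e_i}\widetilde\nabla_{e_j}e_k-\widetilde\nabla_{e_j}\widetilde\nabla_{e_i}e_k-\widetilde\nabla_{[e_i,e_j]}e_k
\]
in the basis $\{e_1,e_2,e_3,e_4\}$ and reading off the coefficient of $e_l$.

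To keep the computation short I would first invoke the standard algebraic symmetries of the Levi-Civita curvature tensor, namely
\[
\widetilde R_{ijkl}=-\widetilde R_{jikl}=-\widetilde R_{ijlk}=\widetilde R_{klij},\qquad \widetilde R_{ijkl}+\widetilde R_{jkil}+\widetilde R_{kijl}=0 ,
\]
so that it suffices to evaluate $\widetilde R(e_i,e_j)e_k$ for the six index-pairs $\{i,j\}\subset\{1,2,3,4\}$ and to record, for each, its components against $e_l$; every remaining component either vanishes identically or is recovered from these by the relations above. The iterated derivatives are then purely mechanical: one expands $\widetilde\nabla_{e_j}e_k$ via Proposition \ref{prop1}, differentiates the result term by term (again via Proposition \ref{prop1}), and adds the bracket correction obtained from Corollary \ref{co1}. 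For instance,
\[
\widetilde\nabla_{e_2}\widetilde\nabla_{e_4}e_2=\widetilde\nabla_{e_2}\!\big(\tfrac12(e_1-e_3)\big)=\tfrac12\big(\tfrac12e_4-\tfrac12e_4\big)=0,\qquad \widetilde\nabla_{e_4}\widetilde\nabla_{e_2}e_2=0,
\]
while $[e_2,e_4]=-e_1$ yields $\widetilde\nabla_{[e_2,e_4]}e_2=-\tfrac12e_4$; hence $\widetilde R(e_2,e_4)e_2=\tfrac12e_4$ and $\widetilde R_{2424}=\tfrac12$, matching the stated value. Observe that, by Corollary \ref{co1}, $[e_i,e_j]=0$ unless $4\in\{i,j\}$, so the bracket term --- and indeed essentially the whole curvature --- originates from the pairs containing $e_4$.

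The single real difficulty is bookkeeping: one must carry out each second covariant derivative carefully and keep the signs straight in the bracket term. As a built-in consistency check I would verify at the end that the resulting list obeys the pair-exchange symmetry and the first Bianchi identity, and that the contracted tensors match an independent computation of the Ricci and scalar curvatures; any discrepancy would localize an arithmetic slip. Finally I would state explicitly that every component not listed, and not deducible from those listed by the symmetries above, is zero, which completes the proof.
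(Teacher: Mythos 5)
Your proposal is correct and follows essentially the same route as the paper: a direct computation of $\widetilde R(e_i,e_j)e_k$ from the definition (\ref{eq1.1}) using the connection table of Proposition \ref{prop1} and the brackets of Corollary \ref{co1}, with the algebraic curvature symmetries invoked only to cut down the bookkeeping; your sample computation giving $\widetilde g(\widetilde R(e_2,e_4)e_2,e_4)=\tfrac12$ checks out. (Only the parenthetical claim that ``essentially the whole curvature originates from the pairs containing $e_4$'' is loose, since for example $\widetilde g(\widetilde R(e_1,e_2)e_1,e_2)=-\tfrac14$ arises purely from iterated covariant derivatives with vanishing bracket term, but this aside carries no weight in the argument.)
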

\begin{proof}
Using the definition of Riemannian curvature (\ref{eq1.1}), the Proposition \ref{prop1}, and the Corollary \ref{co1}.
\end{proof}
According to Proposition \ref{prop2}, we have the following Corollary.
\begin{corollary}
The matrix of Ricci curvature of $(Nil^{4},\widetilde{g})$ is given by
$$(S_{ij})=\begin{pmatrix}
 \frac{1}{2} & 0 & 0 & 0 \\
        0    & 0 & 0 & 0 \\
        0    & 0 & -\frac{1}{2} & 0\\
        0  &   0 &   0 &   -1
\end{pmatrix},$$
where $\displaystyle S_{ij}=\sum_{a=1}^n\widetilde{g}(\widetilde{R}(e_i,e_a)e_a,e_j)$ for all $i,j=1,2,3,4$.
Thus, the scalar curvature of $(Nil^{4},\widetilde{g})$ is $\tau=-1$.
\end{corollary}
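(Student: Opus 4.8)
The plan is to read the Ricci tensor off entrywise from the list in Proposition \ref{prop2}, using the algebraic symmetries
$\widetilde g(\widetilde R(X,Y)Z,W)=-\widetilde g(\widetilde R(Y,X)Z,W)=-\widetilde g(\widetilde R(X,Y)W,Z)=\widetilde g(\widetilde R(Z,W)X,Y)$
to supply the components not written there, and then to sum the diagonal for $\tau$. Every term of $S_{ij}=\sum_{a=1}^{4}\widetilde g(\widetilde R(e_i,e_a)e_a,e_j)$ carries the index multiset $\{i,a,a,j\}$, which is preserved by the symmetries; hence by Proposition \ref{prop2} a term survives only when $\{i,a,a,j\}$ coincides (as a multiset) with the index multiset of one of the listed nonzero components, all others being zero.

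For the diagonal entries, set $K_{ia}=\widetilde g(\widetilde R(e_i,e_a)e_a,e_i)=-\widetilde g(\widetilde R(e_i,e_a)e_i,e_a)$, using antisymmetry in the last pair. The five nonzero sectional-type components of Proposition \ref{prop2} then translate into $K_{12}=K_{14}=K_{23}=\tfrac14$, $K_{24}=-\tfrac12$, $K_{34}=-\tfrac34$, with $K_{13}=0$, whence $S_{ii}=\sum_{a\neq i}K_{ia}$ gives $S_{11}=\tfrac12$, $S_{22}=0$, $S_{33}=-\tfrac12$, $S_{44}=-1$, and therefore $\tau=\sum_{i}S_{ii}=-1$. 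For the off-diagonal entries, the only listed components with two equal indices other than a sectional-type pattern are those with index multisets $\{1,2,2,3\}$ and $\{1,3,4,4\}$, so the only pair $(i,j)$ with $i\neq j$ that can receive any contribution is $(1,3)$ (and $(3,1)$, by symmetry of Ricci). There the $a=2$ term of $S_{13}$ equals $\widetilde g(\widetilde R(e_1,e_2)e_2,e_3)=\tfrac14$ and the $a=4$ term equals $\widetilde g(\widetilde R(e_1,e_4)e_4,e_3)=-\widetilde g(\widetilde R(e_1,e_4)e_3,e_4)=-\tfrac14$, so the two cancel and $S_{13}=0$; every remaining off-diagonal entry is zero term by term.

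The computation is routine; the only point requiring care — and the step I would flag as the main obstacle, such as it is — is the bookkeeping: matching each term $\widetilde g(\widetilde R(e_i,e_a)e_a,e_j)$ against the short list of Proposition \ref{prop2} via the curvature symmetries, so as neither to overlook a contribution (as in the cancellation yielding $S_{13}=0$) nor to count one twice. Once this is settled, the stated matrix $(S_{ij})$ and the value $\tau=-1$ follow.
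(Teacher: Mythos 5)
Your computation is correct and follows exactly the route the paper intends: the paper offers no written proof beyond ``According to Proposition \ref{prop2}'', i.e.\ contracting the listed curvature components with the definition of $S_{ij}$, which is precisely what you carry out. Your bookkeeping of the diagonal terms, the cancellation $S_{13}=\tfrac14-\tfrac14=0$, and the trace $\tau=-1$ all check out against the stated matrix.
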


%%%%%%%%%%%%%%%%%%%%%%%%%%%%%%%%%%%%%%%%%%%%%%%%%%%%%%%%%%%%%%%%%%%%%%%%%%%%%%%%%%%%%%%%%%%%%%%%%%%%%%%%%%%%%%%%%%%%%%%%%%%%%%%%%%%%

\section{Codazzi hypersurfaces in $Nil^{4}$}
Let $ (M^{3},g)$ be a hypersurface in $(Nil^{4},\widetilde{g})$. We have, $\xi=ae_{1}+be_{2}+ce_{3}+de_{4}$ the unit normal vector field on $( M^{3},g)$, where $a,b,c,d$ are local functions on $ M^{3} $. Thus
 \begin{eqnarray*}
% \nonumber to remove numbering (before each equation)
 X_{1}=be_{1}-ae_{2} \,,&  &\, X_{2}=ce_{1}-ae_{3}\\
 X_{3}=de_{1}-ae_{4} \,,&  &\, X_{4}=ce_{2}-be_{3} \\
 X_{5}=de_{2}-be_{4} \,,&  &\, X_{6}=de_{3}-ce_{4}
\end{eqnarray*}
 are tangent vectors fields to the hypersurface $(M^{3},g)$. Now, assume that the hypersurface $(M^{3},g)$ is Codazzi, that is
\begin{equation}\label{eq3.1}
 (\nabla_{X}h)(Y,Z)= (\nabla_{Y}h)(X,Z),\quad\forall X,Y,Z \in\mathfrak{X}(M^3).
\end{equation}
Then it follows from the equation of Codazzi (\ref{eq1.9}) that
\begin{equation}\label{eq3.2}
\widetilde{g}(\widetilde{R}(X_{i},X_{j})X_{k},\xi) =0,\quad\forall i,j,k \in\lbrace 1,...,6\rbrace.
\end{equation}
By using the curvature components given in Proposition \ref{prop2}, we get the following
$$\widetilde{g}( \widetilde{R}(X_{1},X_{2})X_{3},\xi)=\frac{1}{4}abd(a-c)=0,$$
from the which we prove that $a=0$ or $b=0$ or $d=0$ or  $a=c$.\\
$ \bullet $ If $a=0$, we have the following equations
\begin{eqnarray*}
% \nonumber to remove numbering (before each equation)
\widetilde{g}( \widetilde{R}(X_{1},X_{4})X_{1},\xi) &=&-\frac{1}{4}b^{3}c=0,  \\
\widetilde{g}( \widetilde{R}(X_{2},X_{4})X_{4},\xi) &=&\frac{1}{4}c^2b^2+\frac{1}{4}c^2=0,\\
\widetilde{g}( \widetilde{R}(X_{1},X_{5})X_{4},\xi) &=&\frac{1}{2}b^{3}d+\frac{1}{4}bc^2d=0.
\end{eqnarray*}
Thus $c=0$ and $bd=0$. So that, $\xi=e_2$ or $\xi=e_4$. Note that, in the case where $\xi=e_2$,
the Lie bracket $[e_4,e_3]=e_2$ is not tangent vector field on $M^3$ despite $e_2$ and $e_4$ are tangent vectors fields on $M^3$. So,
by Frobenius Theorem (see \cite{W}), this case is unacceptable. Then we have $\xi=e_4$.\\
$ \bullet $ If $b=0$, we obtain the equations
\begin{eqnarray*}
% \nonumber to remove numbering (before each equation)
\widetilde{g}( \widetilde{R}(X_{1},X_{2})X_{1},\xi) &=&\frac{1}{4}a^2(a^2-c^2)=0,  \\
\widetilde{g}( \widetilde{R}(X_{1},X_{3})X_{1},\xi) &=&-\frac{1}{4}a^2d(3a+c)=0.
\end{eqnarray*}
For $a=0$, we get $c=0$. Thus $\xi=e_4$. For $a=\pm c$, we find that $ad=0$. Hence
$\xi=e_4$ or $\xi=\frac{1}{\sqrt{2}}(e_1\pm e_3)$. Note that, in the case where $\xi=\frac{1}{\sqrt{2}}(e_1\pm e_3)$,
the Lie bracket $[e_4,e_2]$ is tangent vector field on $M^3$ because $e_2$ and $e_4$ are tangent vectors fields on $M^3$. But $\widetilde{g}([e_4,e_2],\xi)=
\widetilde{g}(e_1,\xi)=\frac{1}{\sqrt{2}}\neq0$, we obtain a contradiction with the fact that $\xi$ in normal to $M^3$. Therefore, $\xi=e_4$.\\
$ \bullet $ If $d=0$, we have the equations
\begin{eqnarray*}
% \nonumber to remove numbering (before each equation)
\widetilde{g}( \widetilde{R}(X_{1},X_{2})X_{2},\xi) &=&-\frac{1}{4}ab(a-c)^2=0,  \\
\widetilde{g}( \widetilde{R}(X_{1},X_{4})X_{1},\xi) &=&\frac{1}{4}b(a-c)(a^2+b^2+ac)=0,\\
\widetilde{g}( \widetilde{R}(X_{2},X_{4})X_{4},\xi) &=&-\frac{1}{4}c(a-c)(b^2+c^2+ac)=0,\\
\widetilde{g}( \widetilde{R}(X_{1},X_{2})X_{1},\xi) &=&\frac{1}{4}a(a-c)(a^2+b^2+ac)=0,\\
\widetilde{g}( \widetilde{R}(X_{6},X_{2})X_{3},\xi) &=&-a^2c^2-\frac{1}{4}ac(a^2-c^2)=0.
\end{eqnarray*}
Hence $a=c=0$. Thus, $\xi=e_2$. It is unacceptable, because in this case $e_3$ and $e_4$ are tangent vectors fields on $M^3$ but $[e_4,e_3]=e_2$ is not tangent vector field on $M^3$.\\
$\bullet$ If $ a=c $, we get the following equations
\begin{eqnarray*}
% \nonumber to remove numbering (before each equation)
\widetilde{g}( \widetilde{R}(X_{6},X_{2})X_{3},\xi) &=&-c^2(c^2+\frac{1}{2}d^2)=0,  \\
\widetilde{g}( \widetilde{R}(X_{1},X_{5})X_{6},\xi) &=&\frac{1}{2}b^2(c^2-d^2)=0.
\end{eqnarray*}
we obtain $c=0$ and $bd=0$. Hence, $\xi=e_4$. Here, $\xi=e_2$ is unacceptable.\\

\begin{theorem}
A hypersurface $(M^{3},g)$ in the Lie group $(Nil^{4},\widetilde{g})$ is Codazzi if and only if the
unit normal vector field to $(M^{3},g)$ is $\xi=e_4$.
\end{theorem}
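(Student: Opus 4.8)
The statement is a biconditional, so I would split it into the two implications. The ``if'' direction is the easy one: assuming $\xi = e_4$, I would verify directly that $\widetilde g(\widetilde R(X_i,X_j)X_k,\xi)=0$ for all tangent fields $X_i$. With $\xi=e_4$ the natural tangent frame is $\{e_1,e_2,e_3\}$, and from Proposition~\ref{prop2} every nonzero curvature component $\widetilde g(\widetilde R(\cdot,\cdot)\cdot,\cdot)$ that involves $e_4$ in the fourth slot in fact also involves $e_4$ among the first three slots (the components with $e_4$ in the last position are $\widetilde g(\widetilde R(e_1,e_4)e_1,e_4)$, $\widetilde g(\widetilde R(e_1,e_4)e_3,e_4)$, $\widetilde g(\widetilde R(e_2,e_4)e_2,e_4)$, $\widetilde g(\widetilde R(e_3,e_4)e_3,e_4)$, all of which have $e_4$ in an earlier slot too); hence pairing with three vectors from $\{e_1,e_2,e_3\}$ always gives zero. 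By the Codazzi equation~\eqref{eq1.9} this says $(\nabla_Y h)(X,Z)=(\nabla_X h)(Y,Z)$, i.e.\ $(M^3,g)$ is Codazzi.

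\textbf{The ``only if'' direction} is the substantive part, and the plan is exactly the case analysis already begun in the body of the section. Starting from~\eqref{eq3.2}, I compute $\widetilde g(\widetilde R(X_1,X_2)X_3,\xi)=\tfrac14 abd(a-c)$ using Proposition~\ref{prop2} and the expansions of the $X_i$ in the $e_j$ basis, forcing one of $a=0$, $b=0$, $d=0$, or $a=c$. I then treat the four cases in turn: in each, evaluating a handful of further components $\widetilde g(\widetilde R(X_i,X_j)X_k,\xi)$ (the specific ones displayed in the excerpt) and using that these vanish together with $a^2+b^2+c^2+d^2=1$ pins down $\xi$ down to either $\xi=e_4$ or a forbidden candidate such as $\xi=e_2$ or $\xi=\tfrac{1}{\sqrt2}(e_1\pm e_3)$. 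The forbidden candidates are eliminated on integrability grounds: by the Frobenius theorem the tangent distribution of a hypersurface is involutive, so if $e_i,e_j$ are both tangent then $[e_i,e_j]$ must be tangent; using Corollary~\ref{co1}, $\xi=e_2$ is incompatible with $e_3,e_4$ tangent since $[e_4,e_3]=e_2=\xi$, and $\xi=\tfrac1{\sqrt2}(e_1\pm e_3)$ fails because $[e_4,e_2]=e_1$ is tangent yet $\widetilde g(e_1,\xi)\neq 0$. This leaves $\xi=e_4$ as the only possibility.

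\textbf{Main obstacle.} The genuine difficulty is purely computational: expanding each $\widetilde g(\widetilde R(X_i,X_j)X_k,\xi)$ via the multilinearity of $\widetilde R$ into a sum over the curvature components of Proposition~\ref{prop2}, with the coefficients being cubic-times-linear monomials in $a,b,c,d$, and then choosing, in each of the four cases, the \emph{right} small collection of these identities so that their common zero locus (intersected with the unit sphere) is exactly the claimed normal direction. One has to be careful that $\widetilde R$ here is only defined a priori on the ambient $Nil^4$, so the $X_i$ must genuinely be the ambient left-invariant combinations, and that Codazzi for $h$ is equivalent, via~\eqref{eq1.9}, to $\widetilde g(\widetilde R(X,Y)Z,\xi)=0$ for all tangent $X,Y,Z$ — which is why it suffices to test on the spanning set $\{X_1,\dots,X_6\}$. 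No single step is conceptually hard; the work is in organizing the bookkeeping so the case analysis is visibly exhaustive and each elimination is rigorous.
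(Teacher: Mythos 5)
Your proposal is correct and follows essentially the same route as the paper: the ``only if'' direction is exactly the case analysis on $\widetilde g(\widetilde R(X_1,X_2)X_3,\xi)=\tfrac14abd(a-c)$ carried out in the section preceding the theorem (including the Frobenius-theorem eliminations of $\xi=e_2$ and $\xi=\tfrac1{\sqrt2}(e_1\pm e_3)$), and the ``if'' direction is the direct verification that $\widetilde g(\widetilde R(e_i,e_j)e_k,e_4)=0$ for $i,j,k\in\{1,2,3\}$. Your added observation that every nonzero curvature component of Proposition~\ref{prop2} involving $e_4$ contains it in at least two slots makes that last verification slightly more explicit than the paper's ``you can easily check.''
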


\begin{proof}
According to the previous calculations, it suffices to show that
\begin{equation}\label{eq3.3}
\widetilde{g}(\widetilde{R}(X,Y)Z,\xi) =(\nabla_Y h)(X,Z)-(\nabla_X h)(Y,Z)=0,
\end{equation}
for all $X,Y,Z\in \mathfrak{X}(M^3)$ for $\xi=e_4$. You can easily check the equations
$$\widetilde{g}(\widetilde{R}(e_i,e_j)e_k,e_4)=0,\quad i,j,k=1,...,3.$$
\end{proof}

\begin{remark}
(1) According to (\ref{eq1.13}), a Codazzi hypersurface $ (M ^{3},g)$ in $(Nil^{4},\widetilde{g})$ which is given by
   \begin{eqnarray*}
% \nonumber to remove numbering (before each equation)
	f:  (M^{3},g)  &\longrightarrow& (Nil^{4},\widetilde{g}), \\
      (x,y,z)&\longmapsto& (x,y,z,t_{0})
\end{eqnarray*}
where $t_{0}\in\mathbb{R}$. \\
(2) Since $B(X,Y)=(\widetilde{\nabla}_XY)^\perp$ for all $X,Y\in \mathfrak{X}(M^3)$. By using the Proposition \ref{prop1},
the second fundamental form $h$ of the hypersurface $ (M^{3},g) $ in $(Nil^{4},\widetilde{g})$ is given by
$$
 (h_{ij})=\begin{pmatrix}
 0 & \frac{1}{2}  & 0\\
 \frac{1}{2}  &  0 & \frac{1}{2} \\
 0 &\frac{1}{2} & 0
\end{pmatrix},
$$
where $h_{ij}=h(e_i,e_j)$ for all $i,j=1,...,3$. In this case, $(M^{3},g)$ is not totally geodesic because $h\neq 0$, and minimal hypersurface in $(Nil^{4},\widetilde{g})$, that is $H=0$. It is also parallel in $ (Nil^{4},\widetilde{g})$ because $\nabla h=0$.\\
(3) The principal curvatures of $(M^{3},g)$ in $ (Nil^{4},\widetilde{g})$ are $-\frac{1}{\sqrt{2}},0,\frac{1}{\sqrt{2}}$.

One can compute the matrix of Ricci curvature of $ (M^{3},g) $ and get
$$(S_{ij})=\begin{pmatrix}
 \frac{1}{4} & 0 & \frac{1}{4}\\
        0    & \frac{1}{2} & 0\\
        0    & 0 & \frac{1}{4}
\end{pmatrix}.$$
So the scalar curvature of $ (M^{3},g) $ will be by
 $$\tau=1.$$

From the second fundamental form we can get the matrix
of shape operator of $ (M^{3},g) $ in $(Nil^{4},\widetilde{g})$ as so
$$ A_{\xi}=\begin{pmatrix}
 0 & \frac{1}{2} & 0\\
        \frac{1}{2}   & 0 & \frac{1}{2}\\
        0    & \frac{1}{2} & 0
\end{pmatrix}.$$

\end{remark}
From the previous remarks we can conclude the following Corollary.
\begin{corollary}
A hypersurface $(M^{3},g)$ in the Lie group $(Nil^{4},\widetilde{g})$ is Codazzi if and only if it is parallel.
The Lie group $(Nil^{4},\widetilde{g})$ do not have any totally geodesic hypersurface.
\end{corollary}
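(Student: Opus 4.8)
The plan is to deduce both assertions directly from the classification Theorem above together with the explicit form of the second fundamental form recorded in the Remark, so that essentially no new computation is needed.

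For the equivalence ``Codazzi $\iff$ parallel'', I would first note that the implication ``parallel $\Rightarrow$ Codazzi'' holds on any Riemannian manifold and is immediate: if $\nabla h=0$, then both sides of the Codazzi equation $(\nabla_{X}h)(Y,Z)=(\nabla_{Y}h)(X,Z)$ vanish. For the converse I would invoke the Theorem, which says that a Codazzi hypersurface $(M^{3},g)$ in $(Nil^{4},\widetilde g)$ has unit normal $\xi=e_{4}$, so its tangent frame is $\{e_{1},e_{2},e_{3}\}$. By Proposition \ref{prop1}, every covariant derivative $\widetilde\nabla_{e_{i}}e_{j}$ with $i,j\in\{1,2,3\}$ is a multiple of $e_{4}=\xi$, hence the induced connection satisfies $\nabla_{e_{i}}e_{j}=(\widetilde\nabla_{e_{i}}e_{j})^{\top}=0$ for $i,j\in\{1,2,3\}$; since moreover the components $h_{ij}$ computed in the Remark are constants, the cubic form $(\nabla_{e_{i}}h)(e_{j},e_{k})=e_{i}(h_{jk})-h(\nabla_{e_{i}}e_{j},e_{k})-h(e_{j},\nabla_{e_{i}}e_{k})$ vanishes identically. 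Thus $\nabla h=0$ and the hypersurface is parallel; this is precisely statement (2) of the Remark, which I would simply cite.

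For the second assertion I would argue by contradiction. If $(M^{3},g)$ were a totally geodesic hypersurface of $(Nil^{4},\widetilde g)$, then $B\equiv 0$, hence $\nabla h=0$, so $(M^{3},g)$ is parallel and therefore Codazzi; by the Theorem its unit normal is $\xi=e_{4}$. But the Remark shows that for $\xi=e_{4}$ the second fundamental form is the nonzero constant matrix with $h_{12}=h_{21}=h_{23}=h_{32}=\tfrac12$, which contradicts $B\equiv 0$. Hence $(Nil^{4},\widetilde g)$ admits no totally geodesic hypersurface.

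I do not expect a genuine obstacle: the corollary is a formal consequence of the classification Theorem and the explicit form of $h$, both already established. The only points needing care are (i) observing that ``parallel'' trivially implies ``Codazzi'', so the claimed equivalence is not circular, and (ii) reading off $\nabla h=0$ correctly from the facts that $\nabla_{e_{i}}e_{j}=0$ on the tangent frame and the $h_{ij}$ are constant.
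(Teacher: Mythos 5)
Your proposal is correct and follows essentially the same route as the paper, which simply derives the corollary from the classification Theorem together with Remark (2) (where $\nabla h=0$ and $h\neq 0$ are recorded for the normal $\xi=e_{4}$). Your write-up merely makes explicit the two small points the paper leaves implicit, namely that parallel trivially implies Codazzi and that a totally geodesic hypersurface would be parallel, hence Codazzi, hence forced to have the nonzero $h$ of the Remark, a contradiction.
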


%%%%%%%%%%%%%%%%%%%%%%%%%%%%%%%%%%%%%%%%%%%%%%%%%%%%%%%%%%%%%%%%%%%%%%%%%%%%%%%%%%%%%%%%%%%%%%%%%%%%%%%%%%%%%%%%%%%%%%%%%%%%%%%%%%%%

\section{Minimal hypersurfaces in $Nil^{4}$}

Let $ (M^{3},g)$ be a hypersurface in $(Nil^{4},\widetilde{g})$. In this section, we search the conditions for the hypersurface $(M^{3},g)$ to be minimal in $(Nil^{4},\widetilde{g})$, where the unit normal vector field on $( M^{3},g)$ is given by $\xi=ae_{1}+be_{2}+ce_{3}+de_{4}$,
and assume that $\{X_i\}_{1\leq i\leq 3}$ is a local orthonormal frame on $(M^{3},g)$, where
$X_i=a_ie_{1}+b_ie_{2}+c_ie_{3}+d_ie_{4}$ for some local functions $\{a,b,c,d,a_i,b_i,c_i,d_i\}_{1\leq i\leq 3}$ on $M^3$ depends only on the variable $t$.

\begin{theorem}\label{th4}
The hypersurface $(M^{3},g)$ is minimal in $(Nil^{4},\widetilde{g})$ if and only if
$$\sum_{i=1}^3\left[a_i\left(b_id-bd_i\right)+b_i\left(c_id-cd_i\right)
+d_i\left(aa'_i+bb'_i+cc'_i+dd'_i\right)\right]=0.$$
\end{theorem}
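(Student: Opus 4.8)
The plan is to compute the mean curvature $H = \tfrac{1}{3}\operatorname{trace}_g B$ directly in the left-invariant frame, using the decomposition $B(X,Y) = (\widetilde\nabla_X Y)^\perp = \widetilde g(\widetilde\nabla_X Y,\xi)\,\xi$. Since $\{X_i\}_{1\le i\le 3}$ is an orthonormal frame on $M^3$, minimality is equivalent to $\sum_{i=1}^3 \widetilde g(\widetilde\nabla_{X_i} X_i,\xi) = 0$. So the whole statement reduces to expanding $\widetilde g(\widetilde\nabla_{X_i} X_i,\xi)$ in coordinates and summing.

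First I would write $X_i = a_i e_1 + b_i e_2 + c_i e_3 + d_i e_4$ and expand $\widetilde\nabla_{X_i} X_i$ by bilinearity of the connection, remembering that the coefficient functions depend only on $t$, so $X_i(a_i) = d_i\, a_i'$ (since $e_4 = \partial/\partial t$ and $e_1,e_2,e_3$ annihilate functions of $t$ alone), and likewise for $b_i,c_i,d_i$. This produces two kinds of terms: a "derivative part" $d_i(a_i' e_1 + b_i' e_2 + c_i' e_3 + d_i' e_4)$, and a "connection part" $\sum_{p,q} (X_i)^p (X_i)^q \widetilde\nabla_{e_p} e_q$, where the $\widetilde\nabla_{e_p}e_q$ are read off from Proposition \ref{prop1}. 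Then I pair with $\xi = a e_1 + b e_2 + c e_3 + d e_4$: the derivative part contributes $d_i(a a_i' + b b_i' + c c_i' + d d_i')$, which matches the last group in the claimed identity.

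The connection part is the bookkeeping step. From Proposition \ref{prop1} the only nonzero $\widetilde\nabla_{e_p}e_q$ involve index $4$ in exactly one slot, so the relevant products are $a_i d_i$, $b_i d_i$, $c_i d_i$ (the $e_p$–$e_q$ pairs $(1,2),(2,1),(2,3),(3,2)$ paired with the $e_4$ terms, plus the $(p,4)$ and $(4,q)$ brackets). Collecting: $\widetilde\nabla_{e_1}e_2 + \widetilde\nabla_{e_2}e_1 = e_4$, $\widetilde\nabla_{e_2}e_3+\widetilde\nabla_{e_3}e_2 = e_4$, $\widetilde\nabla_{e_1}e_4+\widetilde\nabla_{e_4}e_1 = -e_2$, $\widetilde\nabla_{e_2}e_4+\widetilde\nabla_{e_4}e_2 = -e_3$, $\widetilde\nabla_{e_3}e_4+\widetilde\nabla_{e_4}e_3 = 0$. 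Hence the symmetric connection part of $\widetilde\nabla_{X_i}X_i$ is $a_i b_i e_4 + b_i c_i e_4 - a_i d_i e_2 - b_i d_i e_3$, and pairing with $\xi$ gives $a_i b_i d + b_i c_i d - a_i d_i b - b_i d_i c$, which rearranges to $a_i(b_i d - b d_i) + b_i(c_i d - c d_i)$ — matching the first two groups. Summing over $i=1,2,3$ and setting the total to zero yields the stated identity, and conversely.

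The main obstacle is purely clerical: keeping the signs straight in the pairs $\widetilde\nabla_{e_p}e_q$ for the "$4$ in one slot" cases, since the $e_4$-component of $\widetilde g$ decouples but the $e_1,e_2,e_3$ block is non-diagonal in coordinates — however, because we are working in the orthonormal left-invariant frame $\{e_i\}$ rather than $\{\partial/\partial x,\dots\}$, the metric is the identity in this frame and all pairings are just Euclidean dot products of coefficient vectors, so no Gram-matrix manipulation is needed. A secondary point worth a sentence is why $\xi$ and the $X_i$ may be taken with $t$-dependent coefficients consistently: one simply posits this as the hypothesis of the theorem (the class of hypersurfaces under study), so no integrability argument is required here.
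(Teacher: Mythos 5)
Your proposal is correct and follows essentially the same route as the paper: expand $\widetilde\nabla_{X_i}X_i$ in the orthonormal left-invariant frame using Proposition \ref{prop1}, note that $X_i(f)=d_i f'$ for functions of $t$ alone, pair with $\xi$ (a Euclidean dot product of coefficients since the frame is orthonormal), and set $\sum_i \widetilde g(\widetilde\nabla_{X_i}X_i,\xi)=0$. Your symmetrized sums $\widetilde\nabla_{e_p}e_q+\widetilde\nabla_{e_q}e_p$ reproduce exactly the paper's equation for $\widetilde\nabla_{X_i}X_i$, so the two computations agree term by term.
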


\begin{proof}
Let $i=1,2,3$. We compute
\begin{eqnarray}\label{eq4.1}
% \nonumber to remove numbering (before each equation)
 \widetilde{\nabla}_{X_i}X_i
   &=&\nonumber \widetilde{\nabla}_{a_ie_{1}+b_ie_{2}+c_ie_{3}+d_ie_{4}}\left(a_ie_{1}+b_ie_{2}+c_ie_{3}+d_ie_{4}\right)  \\
   &=&\nonumber a_i\left( a_i\widetilde{\nabla}_{e_1}e_1+b_i\widetilde{\nabla}_{e_1}e_2+c_i\widetilde{\nabla}_{e_1}e_3+d_i\widetilde{\nabla}_{e_1}e_4\right)\\
   &&\nonumber +b_i\left( a_i\widetilde{\nabla}_{e_2}e_1+b_i\widetilde{\nabla}_{e_2}e_2+c_i\widetilde{\nabla}_{e_2}e_3+d_i\widetilde{\nabla}_{e_2}e_4\right)\\
   &&\nonumber +c_i\left( a_i\widetilde{\nabla}_{e_3}e_1+b_i\widetilde{\nabla}_{e_3}e_2+c_i\widetilde{\nabla}_{e_3}e_3+d_i\widetilde{\nabla}_{e_3}e_4\right)\\
   &&\nonumber +d_i\Big(a'_ie_{1}+a_i\widetilde{\nabla}_{e_4}e_1
   +b'_ie_2+b_i\widetilde{\nabla}_{e_4}e_2
   +c'_ie_3+c_i\widetilde{\nabla}_{e_4}e_3\\
   &&+d'_ie_{4}+d_i\widetilde{\nabla}_{e_4}e_4\Big).
\end{eqnarray}
From Proposition \ref{prop1}, and equation (\ref{eq4.1}), we obtain
\begin{eqnarray*}
% \nonumber to remove numbering (before each equation)
 \widetilde{\nabla}_{X_i}X_i
   &=&
   a_i\left(\frac{b_i}{2}e_4-\frac{d_i}{2}e_2\right)
   +b_i\left(\frac{a_i}{2}e_4+\frac{c_i}{2}e_4-\frac{d_i}{2}(e_1+e_3)\right)\\
   &&+c_i\left(\frac{b_i}{2}e_4-\frac{d_i}{2}e_2\right)
   +d_i\Big(a'_ie_{1}-\frac{a_i}{2}e_2
   +b'_ie_2+\frac{b_i}{2}(e_1-e_3)\\
   &&+c'_ie_3+\frac{c_i}{2}e_2+d'_ie_{4}\Big),
\end{eqnarray*}
it is equivalent to the following equation
\begin{equation}\label{eq4.2}
\widetilde{\nabla}_{X_i}X_i=a'_id_ie_{1}+d_i(b'_i-a_i)e_2+d_i(c'_i-b_i)e_3+[d_id'_i+b_i(a_i+c_i)]e_4.
\end{equation}
By equation (\ref{eq4.2}), we have
\begin{eqnarray}\label{eq4.3}
% \nonumber to remove numbering (before each equation)
 \widetilde{g}(\widetilde{\nabla}_{X_i}X_i,\xi)
   &=&\nonumber aa'_id_i+bd_i(b'_i-a_i)+cd_i(c'_i-b_i)+d[d_id'_i+b_i(a_i+c_i)].\\
\end{eqnarray}
Note that, $B(X_i,X_i)=(\widetilde{\nabla}_{X_i}X_i)^\perp$, that is $h(X_i,X_i)=\widetilde{g}(\widetilde{\nabla}_{X_i}X_i,\xi)$. Thus, the hypersurface $(M^3,g)$ is minimal if
\begin{equation}\label{eq4.4}
H=\frac{1}{3}\sum_{i=1}^3\widetilde{g}(\widetilde{\nabla}_{X_i}X_i,\xi)=0.
\end{equation}
The Theorem \ref{th4} follows by  equations  (\ref{eq4.3}) and (\ref{eq4.4}).
\end{proof}

\begin{example}
We consider the following vector fields
\begin{eqnarray*}
% \nonumber to remove numbering (before each equation)
   \xi&=& \frac{2}{\sqrt{5}(2+t^2)}e_1+ \frac{2 t}{\sqrt{5}(2+t^2)}e_2+\frac{t^2}{\sqrt{5}(2+t^2)}e_3-\frac{2}{\sqrt{5}}e_4,\\
   X_1&=& -\frac{t}{\sqrt{1+t^2}} e_1+\frac{1}{\sqrt{1+t^2}} e_2,\\
   X_2&=& -\frac{t^2}{(2+t^2)\sqrt{1+t^2}}e_1-\frac{t^3}{(2+t^2)\sqrt{1+t^2}}e_2+\frac{2\sqrt{1+t^2}}{2+t^2}e_3,\\
   X_3&=& \frac{4}{\sqrt{5}(2+t^2)}e_1+\frac{4t}{\sqrt{5}(2+t^2)}e_2+\frac{2 t^2}{\sqrt{5}(2+t^2)}e_3+\frac{1}{\sqrt{5}}e_4.
\end{eqnarray*}
It is easy to verify that these vector fields satisfy $$\widetilde{g}(\xi,\xi)=1,\quad\widetilde{g}(\xi,X_i)=0,\quad
\widetilde{g}(X_i,X_j)=\delta_{ij},\quad\forall i,j=1,2,3,$$ and the condition of Theorem \ref{th4}.
Thus the hypersurface $(M^{3},g)$ defined by these vector fields is minimal.
According to (\ref{eq1.13}), this hypersurface $(M^{3},g)$ is given by
\begin{eqnarray*}
% \nonumber to remove numbering (before each equation)
	f:  (M^{3},g)  &\longrightarrow& (Nil^{4},\widetilde{g}). \\
      (y,z,t)&\longmapsto& (2t+\frac{t^3}{3},y,z,t)
\end{eqnarray*}
\end{example}

%\Acknowledgements{}{}
%\Funding{}{}
%\Availability{}{}
%\Competinginterests{}{}
%\Authorcontributions{}{}

%% ACKNOWLEDGEMENTS

\normalsize\section*{\large Acknowledgements}\vspace{-0.3cm}
\noindent
The authors are supported by National Agency Scientific Research of Algeria.

\vspace{-0.6cm}
% \section*{\large Funding}\vspace{-0.3cm}
%\noindent There is no funding for this work.
%  \section*{\large Availability of data and materials}\vspace{-0.3cm}
%\noindent Not applicable.\vspace{-0.6cm}
% \section*{\large Competing interests}\vspace{-0.3cm}
%\noindent The authors declare that they have no competing interests.\vspace{-0.6cm}
%\section*{\large Author’s contributions}\vspace{-0.3cm}
%\noindent All authors contributed equally to the writing of this paper. All authors read and approved the final manuscript. \vspace{-0.6cm}

%%	BIBLIOGRAPHY
%%

%% General Considerations Regarding References

%% The author is responsible for the accuracy of the references. All listed references must be cited in the text.

%% Articles which have been accepted for publication should be listed with the expected year of publication (if known) and name of the journal. In place of volume and page numbers, add (in press) at the end.

%% Reference Style and Format

%% The list of references at the end is arranged in numerical order. The names of all authors should be listed. References by the same author or group of authors should be listed in chronological order.

%% References should be styled as follows, with correct punctuation:


\begin{thebibliography}{99}
\bibitem{BDI}  Belkhelfa M.,  Dillen F.,  Inoguchi J.:  \emph{Surfaces with parallel second fundamental form in Bianchi-Cartan-Vranceanu spaces}. Banach Center Publishing, \textbf{57}, 67-87 (2002).
\bibitem{CL}  Calvaruso G.,  De Leo B.:  \emph{Curvature properties of four-dimensional generalized symmetric spaces}. J. Geom. \textbf{90}, 30-46 (2008).
\bibitem{H}  Hasni A.: \emph{Les g\'eom\'etries de Thurston et la pseudo sym\'etrie d'apr\`es R. Deszcz}. PhD thesis, University of Tlemcen, Algeria (2014).
\bibitem{L}  Lawson H.B.:  \emph{Local rigidity theorems for minimal hypersurfaces}. Ann. Math. \textbf{89}, 187-197 (1969).
\bibitem{LV}  Leo B.D.,  Veken J.V.: \emph{Totally geodesic hypersurfaces of four-dimensional generalized symmetric spaces}. Geom Dedicata, \textbf{159}, 373-387 (2012).
\bibitem{ON} O'Neil: \emph{Semi-Riemannian Geometry}. Academic Press, New York (1983).
\bibitem{Filip} Filipkiewicz R.: \emph{Four dimensional geometries}. PhD thesis, University of Warwick, Great Britain (1983).
\bibitem{W}  Willmore T.J.: \emph{Riemannian Geometry}. Clarendon Press, New York (1993).
\end{thebibliography}
\end{document}